\theoremstyle{plain}
\newtheorem{theorem}{Theorem}
\newtheorem{lemma}{Lemma}
\theoremstyle{proof}
\theoremstyle{definition}
\newtheorem{definition}{Definition}[section]
\theoremstyle{remark}
\theoremstyle{lamma}
\numberwithin{equation}{section}
\numberwithin{lemma}{section}
\numberwithin{theorem}{section}
\theoremstyle{thmrm}
\begin{document} 
\title[Certain Diophantine tuples in imaginary quadratic fields]{Certain Diophantine tuples in imaginary quadratic fields}
\author{Shubham Gupta}
\address{Shubham Gupta @Harish-Chandra Research Institute, HBNI,
Chhatnag Road, Jhunsi, Allahabad 211 019, India.}
\email{shubhamgupta@hri.res.in}
\keywords{Diophantine tuples, Imaginary quadratic fields, Pell equation, Simultaneous approximation.}
\subjclass[2010] {Primary: 11D09, 11R11, Secondary: 11J68.}
\maketitle

\begin{abstract} Let $K$ be an imaginary quadratic field and $
\mathcal{O}_K$ be its ring of integers. A set $\{a_1, a_2,
\cdots,a_m\} \subset \mathcal{O}_K\setminus\{0\}$ is called a 
Diophantine $m$-tuple in $\mathcal{O}_K$ with $D(-1)$ if 
$a_ia_j -1 = x_{ij}^2$, where $x_{ij} \in \mathcal{O}_K$ for all 
$i,j$ such that $1 \leq i < j \leq m$. Here we prove 
the non-existence of Diophantine $m$-tuples in $\mathcal{O}_K$ with 
$D(-1)$ for $m > 36$.
\end{abstract}

\section{Introduction} 
A set $\{a_1,a_2, \cdots,a_m\}$ of $m$ positive integers is 
called a Diophantine $m$-tuple with $D(n)$ if $a_ia_j + n 
= x_{ij}^2$, where $x_{ij} \in \mathbb{Z}$ and $n \in \mathbb{Z}
$, for all $1 \leq i < j \leq m$. Diophantus found a set of 
four positive rationals \{1/16, 33/16, 17/4, 105/16\} with the 
above property for $n = 1$. The first Diophantine $4$-tuple with $D(1)$, namely, $\{1, 3, 8, 120\}$
was found by Fermat.
Baker and Davenport \cite{BD1969} proved that this 
particular quadruple cannot be extended to a Diophantine $5$-tuple 
with $D(1)$. Now on whenever we say a $m$-tuple, it would mean a Diophantine $m$-tuple as above.

Let $\{a, b, c\}$ be a $3$-tuple with $D(1)$. If there exists a $d \in \mathbb{N}$ such that $\{a, b, c, d\}$ is a $4$-tuple with $D(1)$, then there exist $x,y,z$ $\in$ $\mathbb{Z}$ 
such that 
$$
ad + 1 = x^2,\hspace{0.2cm} bd + 1=y^2, \hspace{0.2cm} \text{and } cd + 1 
=z^2.
$$
Hence we get an elliptic curve $E$ over $\mathbb{Q}$
$$
E : (xyz)^2=(ad+1)(bd+1)(cd+1).
$$
As the number of integral points on an elliptic curve over $
\mathbb{Q}$ is finite(\cite[page~176]{ST1992}) so the number of possible choices of $d$ is finite. Over the years due to the findings of many researchers there exist many examples of $3$- and $4$-tuples. In 2001, Dujella \cite{DU2001} proved that there are atmost finitely many Diophantine 8-tuple with $D(1)$ and there does not exist Diophantine $9$-tuple with $D(1)$. In 2004, he improved this result and proved that there does not exist Diophantine 6-tuple with $D(1)$ and there exist atmost finitely many Diophantine 5-tuple with $D(1)$ (see \cite{DU2004}).
There was a `folklore' conjecture that there 
does not exist Diophantine $5$-tuples with $D(1)$. This is recently (in 2019) been settled by B. He et. al.
\cite{HTZ2019} in a pioneering work. Let
$$
S(n) = \max \{|A|: A \text{ is a Diophantine} ~~m-\text{tuple with} ~D(n) \}.
$$
Thus from the work of He et.al. $S(1) \leq 4$. 
 Dujella and Fuchs \cite{DF2005} showed that there do not exist Diophnatine 5-tuples with $D(-1)$. Dujella, Fuchs and Filipin \cite{DFF2007} also proved that there exist atmost finitely many Diophnatine 4-tuple with $D(-1)$. Furthermore they showed that, any such Diophantine 4-tuple with $D(-1)$ $\{a_1,\cdots,a_4\}$ should satisfy $a_4<10^{903}$. This bound was further reduced to $3.01 \times 10^{60}$ by Trudgian \cite{TR2015}.
 
\begin{definition}\label{Def_1}
A set $\{a_1, a_2,\cdots, a_m\} \subset \mathcal{O}_K \setminus \{0\}$ is called Diophantine $m$-tuples in $\mathcal{O}_K$ with $D(n)$ 
if $a_ia_j + n = x_{ij}^2$, $x_{ij} \in \mathcal{O}_K$ for all $1 \leq i < j \leq m$.
\end{definition}

For the remainder of the article, $m$ and $n$ carry the same meaning as in definition \ref{Def_1} above.

In 1997, Dujella proved that there does not exist Diophantine 4-tuple in $\mathbb{Z}[i]$ with $D(a+bi)$ , where $b$ is odd or $a \equiv b \equiv 2 \pmod{4}$ (see \cite{DU1997}). For $n = 1$, Azadaga \cite{AD2019} proved that $m \leq 42$. For $n=-1$, Soldo studied the extension of certain triples to quadruples (see \cite{SO2013}, \cite{SO2016}). In this paper, we studied the existence of $m$-tuple with $D(-1)$ and obtained the following:

\begin{theorem}\label{thm1}
Let $K$ be an imaginary quadratic field and $\mathcal{O}_K$ be its ring of integers. Then there does not exist Diophantine $m$-tuple with $D(-1)$ for $m > 36$ in $\mathcal{O}_K$. 
\end{theorem}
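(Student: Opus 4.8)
The plan is to argue by contradiction: suppose $\{a_1,\dots,a_m\}$ is a $D(-1)$ tuple in $\mathcal{O}_K$ with $m>36$, and order the elements so that $|a_1|\le|a_2|\le\cdots\le|a_m|$. Since $\mathcal{O}_K$ carries no natural linear order, the absolute value $|\cdot|$ (equivalently the norm $N(a)=a\bar a$) is the only available size function, so every estimate must be phrased through $|a_i|$. First I would record the elementary inequalities $|a_i||a_j|-1\le|x_{ij}|^2\le|a_i||a_j|+1$ coming from $x_{ij}^2=a_ia_j-1$, which pin down $|x_{ij}|\approx\sqrt{|a_i||a_j|}$ and feed every later estimate. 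I would then split the tuple into \emph{small} elements, with $|a_i|$ below an absolute constant, and \emph{large} elements, the point being that the two groups are controlled by entirely different mechanisms and that the bound $36$ is the sum of the two resulting bounds.

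For the large elements the engine is the simultaneous Pell structure. Fixing any three elements $a,b,c$ of the tuple and an extending element $d$, and eliminating $d$ from $ad-1=x^2$, $bd-1=y^2$, $cd-1=z^2$, gives the coupled system $bx^2-ay^2=a-b$, $cx^2-az^2=a-c$, $cy^2-bz^2=b-c$. The solutions of each generalized Pell equation lie in finitely many binary‑recurrence sequences determined by fundamental solutions, and I would analyse these sequences through their absolute values to prove a gap principle: a $D(-1)$ quadruple $\{a,b,c,d\}$ forces $|d|$ to be much larger than $|c|$ unless $d$ is the \emph{regular} extension $d=a+b+c-2abc\pm 2rst$, where $ab-1=r^2$, $ac-1=s^2$, $bc-1=t^2$. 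A simultaneous‑approximation lemma of Baker–Davenport type then rules out the non‑regular large solutions, so that among the large elements every quadruple is regular. Because the regular extension is determined up to the two signs, this rigidity caps the number of large elements by a bounded constant.

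For the small elements I would exploit a feature special to imaginary quadratic fields: once $|\mathrm{disc}\,K|$ is large, every element of $\mathcal{O}_K$ of bounded absolute value is a rational integer, since a nonzero imaginary part forces $|a|\ge\tfrac12\sqrt{|\mathrm{disc}\,K|}$. Hence for all but finitely many $K$ the small elements of the tuple form a classical $D(-1)$ tuple in $\mathbb{Z}$ (if some were negative, replacing $a_i$ by $-a_i$ leaves the products unchanged), which by the Dujella–Fuchs result recalled in the introduction contains at most four elements. The finitely many fields of small discriminant I would handle directly, checking the square condition modulo a well chosen prime ideal so that the quadratic‑residue constraints limit how many residue classes can occur pairwise, together with the unit group $\mathcal{O}_K^{\times}$, whose order is at most $6$. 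Adding the bound on large elements to the bound on small elements gives $m\le 36$, the desired contradiction.

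The main obstacle I anticipate is the step converting the recurrence analysis into a genuine gap principle over $\mathbb{C}$: without a linear order one cannot simply assert that the next solution is larger, and the cross terms in $|bx^2-ay^2|=|a-b|$ must be controlled carefully to separate the regular extension from the spurious ones. Equally delicate is making every constant uniform in $K$ — the recurrence growth rates, the approximation lemma, and the small/large threshold all depend a priori on the field, and forcing them into the single absolute bound $36$ is where the bookkeeping will be hardest.
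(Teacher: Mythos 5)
Your proposal follows the same broad family of techniques as the paper (Pell system from a quadruple, a regularity-based gap principle, a simultaneous-approximation lemma), but two of your key steps contain genuine gaps. First, the small-element reduction fails as stated. Even when all small elements are rational integers, the witnesses $x_{ij}$ need not be: one can have $a_ia_j-1=-Dy^2=(y\sqrt{-D})^2$ with $y\in\mathbb{Z}$, so the small elements do \emph{not} form a classical $D(-1)$ tuple in $\mathbb{Z}$. The paper's own example $\{1,2,5,-24\}$ in $\mathbb{Z}[i]$ makes this concrete: $1\cdot(-24)-1=-25=(5i)^2$, so this is a $D(-1)$ quadruple in $\mathbb{Z}[i]$ with rational integer entries that is not one in $\mathbb{Z}$. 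Your remark that replacing $a_i$ by $-a_i$ ``leaves the products unchanged'' is also false: flipping one entry changes the sign of its products with all the others. The paper's corresponding step is far more modest and quantitative: it only treats tuples of very small absolute value ($|e|<15$), checks $D<226$ by computer, and for $D\geq 226$ rules out $a_ia_j-1=-Dy^2$ precisely because $|a_ia_j|+1<226\leq D$ forces $y=0$; the threshold and the discriminant must be played against each other, which your sketch does not do.

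Second, your treatment of the large elements overclaims what Diophantine approximation can deliver. A Bennett/Jadrijevi\'c--Ziegler type lemma yields an \emph{upper bound} on non-regular-type solutions (in the paper: $|d|<3956^{10}|c|^{24}$ for quadruples with $|c|>|b|^{16}$, $|b|\geq 22$, $|a|\geq 2$), not their non-existence; it is not known, and the paper never proves, that all large quadruples are regular, so your rigidity argument (``regular extension determined up to two signs, hence a bounded number of large elements'') has no support. The paper instead extracts a gap principle from the \emph{triple} regularity condition $c=a+b\pm 2r$: Lemma 4.1 shows $\{a,b,c\}$ and $\{a,b,d\}$ cannot both be regular, and Lemma 4.2 converts this, via $c_{\pm}=a+b+d-2abd\pm 2rxy$, into $|d|\geq \frac{65}{330}|ab|$. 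Iterating this along consecutive quadruples gives doubly exponential growth ($|a_{22}|\geq |a_7|^{32}/(330/65)^{31}$, $|a_{37}|\geq |a_{22}|^{32}/(330/65)^{31}$), which collides with the polynomial upper bound $|a_{22+k}|<3956^{10}|a_{22}|^{24}$ to force $m\leq 36$. In particular the bound $36$ arises from this index bookkeeping along one chain, not, as you propose, from adding a small-element count to a large-element count; without replacing your ``all large quadruples are regular'' step by an upper-bound-versus-gap-principle collision of this kind, the argument does not close.
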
 

Here is a brief of how we proceed to prove the above result. We employ similar techniques as that of Azadaga \cite{AD2019}. Let $\{a, b, c\}$ be a triple in $\mathcal{O}_K$ with $D(-1)$. If $d \in \mathcal{O}_K$ such that $\{a, b, c, d\}$ be a quadruple with $D(-1)$,
then we get a system of Pellian equations. Using the solution of these Pellian equations and a result of Jedrizevi\'{c}-Zeigler \cite{JZ2006}, we will get an upper bound on $d$ in term of $c$, if $\{a, b, c, d\}$ satisfies some conditions. 
Further using the regularity condition (refer section \ref{Sec_4} below) on $\{a, b, c, d\}$ one gets a lower bound, i.e., $ d \geq g(a)$ for some function $g$ in terms of $a$. We use SAGE for the computations and prove Theorem \ref{thm1} by contradiction. The lower and upper bounds on $d$ will give the desired contradiction.

\section{System of Pellian equations}
Let $K = \mathbb{Q}(\sqrt{-D})$ with $D$ a square free positive integer. We know that $\mathcal{O}_K = \mathbb{Z}[\omega] = \{a + b\omega: a, b \in \mathbb{Z}\}$, where 
$$
\omega=\begin{cases}
  \sqrt{-D} & \text{if $-D \equiv 2,3 \pmod{4}$},\\
  \dfrac{1+\sqrt{-D}}{2} & \text{if $-D \equiv 1 \pmod{4}$}.
 \end{cases}
$$
If $\alpha = \left(a + \dfrac{b}{2}\right) + \dfrac{ b }{2} \sqrt{-D} \in \mathcal{O}_K$ then the norm of $\alpha$:
$$
||\alpha|| = \left(a+\dfrac{b}{2}\right)^2+\dfrac{Db^2}{4},
$$
and in particular if $ \alpha = a + b\sqrt{-D}$, 
then 
$$
||\alpha|| = a^2 + Db^2.
$$
Then the absolute value of $\alpha \in \mathcal{O}_K$ (denoted as $|\alpha|$) is defined as 
$|\alpha| = \sqrt{||\alpha||}$.
When $D = 1$ the units in $\mathbb{Z}[i]$ are $\{\pm 1, \pm i\}$, when $D=3$ the units are $\left\{\pm 1, \dfrac{\pm 1 \pm \sqrt{-3}}{2}\right\}$ and else the units are $\{\pm 1\}$. 

\textbf{Notations-} Throughout, a triple $\{a, b, c\}$ will denote a Diophantine $3$-tuple in $\mathcal{O}_K$ such that $0<|a|\leq |b| \leq |c|$ with property $D(-1)$ and similarly other tuples.
Let $r, s, t \in \mathcal{O}_K$ such that
$$
r=\sqrt{ab-1},\ s = \sqrt{ac-1} ~~\text{and}~~ t = \sqrt{bc-1},
$$
where $a, b, c, d$ form a quadruple.
\begin{lemma}
Let $\mathcal{A} = \{a_1, a_2, a_3, \cdots, a_m\}$ be a $m$-tuple in $\mathcal{O}_K$ with $D(-1)$. Then, for $m \geq 4$, $a_ia_j$ is not a square in $\mathcal{O}_K$ for all $1 \leq i < j \leq m$. Also, for $m \geq 4$, $a_ia_j$ is not a square in $K$.
\end{lemma}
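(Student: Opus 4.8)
The plan is to prove the stronger of the two assertions, namely that $a_ia_j$ is not a square in $K$, and I would begin by noting that for our purposes the two statements are equivalent. If $\xi \in K$ satisfies $\xi^2 = a_ia_j$, then $\xi$ is a root of the monic polynomial $X^2 - a_ia_j \in \mathcal{O}_K[X]$, so $\xi$ is integral over $\mathcal{O}_K$ and hence over $\mathbb{Z}$; since $\mathcal{O}_K$ is integrally closed and $\xi \in K$, we get $\xi \in \mathcal{O}_K$. Thus a square in $K$ is automatically a square in $\mathcal{O}_K$, and it suffices to rule out $a_ia_j = w^2$ with $w \in \mathcal{O}_K$. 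I argue by contradiction, assuming such a $w$ exists for some pair $i<j$.

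The first key step is a unit equation. Since $a_ia_j - 1 = x_{ij}^2$ with $x_{ij} \in \mathcal{O}_K$, we have $(w - x_{ij})(w + x_{ij}) = w^2 - x_{ij}^2 = 1$, so both factors are units of $\mathcal{O}_K$. Writing $w - x_{ij} = u$ and $w + x_{ij} = u^{-1}$ gives $2w = u + u^{-1}$, and running through the unit groups recorded above I would check that $w \in \mathcal{O}_K$ forces $u = \pm 1$: for $D = 1$ the choice $u = \pm i$ yields $w = 0$, whence $a_ia_j = 0$, impossible since the entries are nonzero; for $D = 3$ the remaining units give $u + u^{-1} = 2\Re(u) = \pm 1$, so $w = \pm\tfrac12 \notin \mathcal{O}_K$. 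Hence $w = \pm 1$ and $a_ia_j = 1$, so $a_i$ and $a_j$ are units with $a_j = a_i^{-1}$.

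The remaining argument splits according to the unit group, and this is where the hypothesis $m \geq 4$ enters. When the only units are $\pm 1$ (all $D \neq 1, 3$), $a_ia_j = 1$ forces $a_i = a_j$, contradicting that the entries of a tuple are distinct, and no further elements are needed. For $D = 1$ the only distinct unit pair with product $1$ is $\{i, -i\}$, and I would show it admits no third tuple-element at all: a putative element $c$ satisfies $ic - 1 = \alpha^2$ and $-ic - 1 = \beta^2$, whose sum gives $\alpha^2 + \beta^2 = -2$; factoring $(\alpha + i\beta)(\alpha - i\beta) = -2$ over $\mathbb{Z}[i]$ and tracking the ramified prime $1 + i$ shows every solution has $\alpha^2 = -1$, whence $c = -i(\alpha^2 + 1) = 0$, a contradiction. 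Since $m \geq 4 > 2$, this case cannot occur either.

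The genuinely delicate case is $D = 3$, where the units are the sixth roots of unity and, unlike the previous cases, triples with a square pair-product do exist; indeed $\{1, \omega, \omega^{-1}\}$ with $\omega = \tfrac{1+\sqrt{-3}}{2}$ is such a triple, which is precisely why the lemma needs $m \geq 4$. Here $a_ia_j = 1$ forces $\{a_i, a_j\}$ to be one of $\{\omega, \omega^{-1}\}$ or $\{\omega^2, \omega^{-2}\}$. For the first pair, eliminating a further element $c$ from $\omega c - 1 = \alpha^2$ and $\omega^{-1} c - 1 = \beta^2$ gives $\omega^4(\alpha^2+1) = \beta^2+1$, i.e. $(\omega^2 \alpha)^2 - \beta^2 = \tfrac{3 + \sqrt{-3}}{2}$, an element of norm $3$ and hence prime in $\mathcal{O}_K$; the finitely many factorizations into a unit times a norm-$3$ prime all return the single value $c = 1$, and the second pair similarly returns only $c = -1$. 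Thus each bad unit pair extends by at most one additional element, so the tuple has at most three entries, contradicting $m \geq 4$. The main obstacle is exactly this $D = 3$ analysis: one must solve the auxiliary quadratic relation over $\mathcal{O}_K$ explicitly and confirm that the forced extension is a single element, so that a fourth entry is genuinely impossible.
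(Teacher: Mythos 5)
Your proof is correct and follows essentially the same route as the paper: the factorization $(w-x_{ij})(w+x_{ij})=1$ reduces to the unit equation $a_ia_j=1$, followed by a case analysis on the unit groups for $D=1$, $D=3$, and $D\neq 1,3$, with integral closedness of $\mathcal{O}_K$ handling squares in $K$. The only difference is that you make explicit (via the ramified prime $1+i$ for $D=1$ and the norm-$3$ prime $\tfrac{3+\sqrt{-3}}{2}$ for $D=3$) the non-extendability claims that the paper leaves as ``one can easily check,'' which is a welcome strengthening of the exposition rather than a different argument.
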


\begin{proof}
If $\{a, b\}$ be a pair in $\mathcal{A}$ such that $ab = x^2$ where $x \in \mathcal{O}_K \setminus \{0\}$, then
$$
ab-1=r^2=x^2-1 \Rightarrow 1=x^2-r^2=(x-r)(x+r) \Rightarrow x=0 \hspace{0.2cm} \text{or} \hspace{0.2cm} r=0,
$$
so $r=0$ and hence $ab = 1$. If $D = 1$ then 
 $a, b \in \{i,-i\}$ and it implies that if $\{a, b, c\}$ be a triple then $c$ has to be one of $\{ \pm i\}$. One can easily check that $\{a, b, c\}$ is not triple in $\mathcal{O}_K$ with $D(-1)$. On the other hand when $D=3$ then 
$a, b \in \Big\{\dfrac{ \pm 1 \pm \sqrt{-3}}{2}\Big\}$. It implies that if $\{a, b, c\}$ be a triple then, $c$ is one of $\{\pm 1\}$. Thus only two pairs
$\Big\{\dfrac{1 + \sqrt{-3}}{2}, \dfrac{1 - \sqrt{-3}}{2}\Big\}$ and $\Big\{\dfrac{-1 + \sqrt{-3}}{2}, \dfrac{-1 - \sqrt{-3}}{2}\Big\}$ survive. The corresponding triples are 
$$
\Big\{\dfrac{1 + \sqrt{-3}}{2}, \dfrac{1 - \sqrt{-3}}{2}, 1\Big\}~~ \text{and}~~ \Big\{\dfrac{-1 + \sqrt{-3}}{2}, \dfrac{-1 - \sqrt{-3}}{2}, -1\Big\}.
$$
Note also that these pairs $\Big\{\dfrac{1 + \sqrt{-3}}{2}, \dfrac{1 - \sqrt{-3}}{2}\Big\}$ and $\Big\{\dfrac{-1 + \sqrt{-3}}{2}, \dfrac{-1 - \sqrt{-3}}{2}\Big\}$ cannot be extended to quadruple. Now if $D \neq 1,3$ then the units are $\pm 1$ so either $a=b=1$ or $a=b=-1$. Hence $ab$ is not a square in $\mathcal{O}_K$.

Now if $ab$ is a square in $K$, then it is a root of monic polynomial $x^2-ab$. Since $\mathcal{O}_K$ is integrally closed, $ab$ is not a square in $K$. Hence $ab$ is not a square in $K$.
\end{proof}
Let us suppose $\{a, b, c\}$ extends to a quadruple $\{a, b, c, d\}$. Thus there exist $x,y,z \in \mathcal{O}_K$ such that \[ad-1=x^2,\hspace{0.2cm} bd-1=y^2,\hspace{0.2cm} cd-1=z^2.\] 
Thus there is a system of Pell's equations:
\begin{equation}\label{eqq8}
az^2-cx^2=c-a
\end{equation}
\begin{equation}\label{eqq9}
bz^2-cy^2=c-b
\end{equation}
with \( d = \dfrac{z^2+1}{c}\).
\section{Upper bound of $d$ in term of $c$}
Let $\{a, b, c, d\}$ be a quadruple.
We will see that if $c$ is bounded by some power of $b$ then $d$ is bounded by some power of $c$. In 1998, Bennett \cite{BE1998} proved a theorem which is related to simultaneous approximations of rationals, where these rationals have square roots close to one.
Jadrijevi\`c-Zeigler proved the following theorem which is an analog to Bennett's theorem.

\begin{lemma}$($Jadrijevi\'c -Zeigler \cite[Theorem~7.3, 7.4]{JZ2006}$)$ \label{lem1}
Let $\theta_i = \sqrt{1+\dfrac{a_i}{T}}$, $i=1,2$ with $a_1, a_2$ distinct algebraic integers in $K$, and $T$ be any algebraic integer of $K$. Further, let $M=\max\{|a_1|,|a_2|$\}, $|T|>M$, $a_0 = 0$ and 
$$
L=\dfrac{27}{16|a_1|^2|a_2|^2|a_1-a_2|^2}(|T|-M)^2>1
$$
Then
\begin{equation}
\max \{|\theta_1-p_1/q|, |\theta_2-p_2/q|\} > c_1|q|^{-\lambda} 
\end{equation}
for all algebraic integers $p_1,p_2,q \in K$ 
where
\begin{align*}
\lambda & =1+\dfrac{\log P}{\log L},\hspace{0.2cm}c_1^{-1} =4pP(\max \{1,2l\})^{\lambda - 1},\\
l & =\dfrac{27|T|}{64(|T|-M)}, \hspace{0.2cm} p = \sqrt{\dfrac{2|T|+3M}{2|T|-2M}},\\
P & =16\dfrac{|a_1|^2|a_2|^2|a_1-a_2|^2}{\min\{|a_1|, |a_2|, |a_1-a_2|\}^3}(2|T|+3M).
\end{align*}

\end{lemma}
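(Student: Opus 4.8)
The plan is to establish this effective simultaneous approximation estimate by the hypergeometric (Thue--Siegel) method over the imaginary quadratic field $K$, following the framework of Bennett and Rickert. The point is that $\theta_i = \sqrt{1 + a_i/T}$ is the value at the small argument $x = 1/T$ of the binomial function $f_i(x) = (1 + a_i x)^{1/2}$, and $|1/T| < 1/M$ makes this argument small; the trivial index $a_0 = 0$ corresponds to $\theta_0 = 1 = f_0(x)$. So the real task is to approximate the three functions $f_0, f_1, f_2$ simultaneously by polynomials of controlled height, and then transfer the resulting gap principle to the numbers $\theta_1, \theta_2$.

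First I would construct, for each positive integer $n$, a system of polynomials $p_{jk}(x) \in K[x]$ $(0 \le j,k \le 2)$ of degree $n$ such that each linear form $\sum_k p_{jk}(x) f_k(x)$ vanishes to order $2n+1$ at $x=0$; these are given explicitly by contour integrals of a binomial integrand, equivalently by values of Gauss hypergeometric polynomials ${}_2F_1$. Evaluating at $x = 1/T$ and clearing denominators by a suitable $\Delta_n \in \mathcal{O}_K$ produces, for each $n$, three $K$-linearly independent approximation vectors in $\mathcal{O}_K^3$ whose entries are close, in the ratio sense, to $(1, \theta_1, \theta_2)$.

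The two decisive estimates are analytic and arithmetic, and their balance is exactly the hypothesis $L > 1$. On the analytic side I would bound the size $|p_{jk}(1/T)|$ from above and the remainder $\bigl|\sum_k p_{jk}(1/T)\theta_k\bigr|$ from above; because the argument is small the remainder decays geometrically, and the explicit hypergeometric coefficients are where the numerical constants $\tfrac{27}{16}$, $l = \tfrac{27|T|}{64(|T|-M)}$ and $p = \sqrt{(2|T|+3M)/(2|T|-2M)}$ enter, via a saddle-point estimate on the contour. On the arithmetic side I would bound $|\Delta_n|$ by controlling the $\mathfrak{p}$-adic valuations of the binomial coefficients occurring in $p_{jk}$; these are supported on primes $\le 2n+1$, so $|\Delta_n|$ grows like a fixed constant to the power $n$. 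Balancing the geometric decay of the remainder against the growth of $\Delta_n$ yields the effective exponent $\lambda = 1 + \log P/\log L$, with $P$ the quantity in the statement.

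Finally I would combine these with a non-vanishing lemma --- for each $n$ the $3\times 3$ matrix $(p_{jk}(1/T))$ has nonzero determinant, which follows from an explicit evaluation showing the determinant is a nonzero constant times a power of $x$. Given any candidate approximations $p_1/q, p_2/q$ with $|q|$ large, I would choose $n$ optimally in terms of $|q|$ and form a $K$-linear combination of one approximation vector with $(q, p_1, p_2)$; linear independence forces this combination to be a nonzero element of $\mathcal{O}_K$, hence of absolute value $\ge 1$, while the analytic estimates bound it above by a multiple of $|q|^{\lambda}$ times the approximation error. Comparing the two inequalities and reading off the constant gives $c_1^{-1} = 4pP(\max\{1,2l\})^{\lambda-1}$ and the claimed bound. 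The main obstacle will be the simultaneous sharpening of the contour estimate and the denominator estimate: one must track the constants precisely enough that $L>1$ actually forces $\lambda$ into the stated closed form, since any looseness in either bound degrades the exponent and destroys the clean statement.
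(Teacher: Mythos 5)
This lemma is stated in the paper without proof --- it is imported verbatim from Jadrijevi\'c--Ziegler \cite[Theorems~7.3, 7.4]{JZ2006} --- so the only meaningful comparison is with that cited source, not with anything internal to the paper. Your outline (simultaneous Pad\'e-type approximation to $(1+a_i x)^{1/2}$ via hypergeometric contour integrals, a nonvanishing determinant lemma, and a balance of the analytic decay of the remainders against the growth of the denominators $\Delta_n$ to produce $\lambda = 1 + \log P/\log L$ and the constant $c_1$) is essentially the same Bennett--Rickert method that Jadrijevi\'c and Ziegler carry out in the imaginary quadratic setting, so your approach matches the actual proof in the literature; the one caveat is that your sketch deliberately defers the constant bookkeeping, which for a fully explicit statement of this kind is where all of the real work lies.
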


\begin{lemma}\label{lem2}
Let $(x,y,z)$ be a solution of the system of equations \eqref{eqq8} and \eqref{eqq9}. Assume $|c|>4|b|$, $|a|\geq 2$. If 
$\theta_1^{(1)} = \pm \dfrac{s}{a}\sqrt{\dfrac{a}{c}}, \hspace{0.2cm} \theta_1^{(2)} = - \theta_1^{(1)}$ and $\theta_2^{(1)} = \pm \dfrac{t}{b}\sqrt{\dfrac{b}{c}}, \hspace{0.2cm}\theta_2^{(2)} = - \theta_2^{(1)}$ with `sign' chosen so that
\(\Big|\theta_1^{(1)} - \dfrac{sx}{az}\Big| \leq \Big|\theta_1^{(2)} - \dfrac{sx}{az} \Big|\) and \(\Big|\theta_2^{(1)} - \dfrac{ty}{bz} \Big| \leq \Big|\theta_2^{(2)} - \dfrac{ty}{bz} \Big|\), then 
\begin{equation}\label{eqq6}
\Big|\theta_1^{(1)} - \dfrac{sbx}{abz}\Big| \leq 
\dfrac{|s||a-c|}{|a|\sqrt{|ac|}} \times \dfrac{1}{|z|^2} < \dfrac{21|c|}{16|a|} \times \dfrac{1}{|z|^2}
\end{equation} 
and
\begin{equation}\label{eqq7}
 \Big|\theta_2^{(1)} - \dfrac{tay}{abz}\Big| \leq \dfrac{|s||a-c|}{|b|\sqrt{|bc|}} \times \dfrac{1}{|z|^2} < \dfrac{21|c|}{16|a|} \times \dfrac{1}{|z|^2}.
 \end{equation} 
\end{lemma}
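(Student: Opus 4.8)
The plan is to reduce both \eqref{eqq6} and \eqref{eqq7} to a single quantitative statement: that $x/z$ (respectively $y/z$) is an extremely good approximation to a chosen branch of $\sqrt{a/c}$ (respectively $\sqrt{b/c}$). Squaring shows $\bigl(\theta_1^{(1)}\bigr)^2 = \frac{s^2}{a^2}\cdot\frac{a}{c} = \frac{ac-1}{ac} = 1 - \frac{1}{ac}$, so $\theta_1^{(1)} = \pm\frac{s}{a}\sqrt{a/c}$ is a branch of $\sqrt{1-1/(ac)}$, and similarly $\theta_2^{(1)}$ is a branch of $\sqrt{1-1/(bc)}$. Since $\frac{sbx}{abz} = \frac{sx}{az} = \frac{s}{a}\cdot\frac{x}{z}$, the left side of \eqref{eqq6} equals $\frac{|s|}{|a|}\bigl|\sqrt{a/c}-x/z\bigr|$, where the sign of $\sqrt{a/c}$ is taken to match the chosen $\theta_1^{(1)}$. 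So everything reduces to estimating $\bigl|\sqrt{a/c}-x/z\bigr|$.

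First I would extract the relevant identity from \eqref{eqq8}. Dividing $az^2-cx^2 = c-a$ by $cz^2$ gives $\frac{a}{c}-\frac{x^2}{z^2} = \frac{c-a}{cz^2}$, and factoring the difference of squares yields
\[
\sqrt{\frac{a}{c}} - \frac{x}{z} = \frac{c-a}{cz^2}\cdot\frac{1}{\sqrt{a/c}+x/z}.
\]
Taking absolute values, \eqref{eqq6} will follow once the denominator $\bigl|\sqrt{a/c}+x/z\bigr|$ is bounded below.

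The heart of the argument, and the step I expect to require the most care, is the lower bound $\bigl|\sqrt{a/c}+x/z\bigr| \geq \bigl|\sqrt{a/c}\bigr| = \sqrt{|a|/|c|}$. This is precisely where the sign convention enters: the choice of $\theta_1^{(1)}$ forces $\bigl|\sqrt{a/c}-x/z\bigr| \leq \bigl|\sqrt{a/c}+x/z\bigr|$, and then
\[
2\bigl|\sqrt{a/c}\bigr| = \bigl|(\sqrt{a/c}+x/z)+(\sqrt{a/c}-x/z)\bigr| \leq \bigl|\sqrt{a/c}+x/z\bigr| + \bigl|\sqrt{a/c}-x/z\bigr| \leq 2\bigl|\sqrt{a/c}+x/z\bigr|
\]
gives the claim. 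Here one must use that $|\cdot|$ is the genuine complex modulus on $\mathcal{O}_K\subset\mathbb{C}$, so both the triangle inequality and multiplicativity $|ac|=|a||c|$ are available. Feeding this back, together with $|s|=\sqrt{|ac-1|}$, produces the middle bound $\frac{|s||a-c|}{|a|\sqrt{|ac|}}\cdot\frac{1}{|z|^2}$ of \eqref{eqq6}.

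For the final numerical inequality I would invoke the hypotheses $2\leq|a|\leq|b|\leq|c|$ and $|c|>4|b|$. These give $|a|<|c|/4$, hence $|a-c|\leq|a|+|c|<\frac{5}{4}|c|$, while $|s|=\sqrt{|ac-1|}\leq\sqrt{|a||c|+1}$ and $|a||c|\geq 16$. Combining,
\[
\frac{|s||a-c|}{|a|\sqrt{|ac|}} < \frac{5}{4}\cdot\frac{|c|}{|a|}\sqrt{1+\frac{1}{|a||c|}} \leq \frac{5\sqrt{17}}{16}\cdot\frac{|c|}{|a|} < \frac{21}{16}\cdot\frac{|c|}{|a|},
\]
the last step because $5\sqrt{17}=\sqrt{425}<\sqrt{441}=21$; this is exactly \eqref{eqq6}. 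The estimate \eqref{eqq7} is proved identically, now starting from \eqref{eqq9}: the same factoring gives $\sqrt{b/c}-y/z = \frac{c-b}{cz^2}\bigl(\sqrt{b/c}+y/z\bigr)^{-1}$, the sign choice for $\theta_2^{(1)}$ yields $\bigl|\sqrt{b/c}+y/z\bigr|\geq\sqrt{|b|/|c|}$, and one reaches the bound $\frac{|t||c-b|}{|b|\sqrt{|bc|}}\cdot\frac{1}{|z|^2}$, which by $|b|\geq|a|$, $|t|=\sqrt{|bc-1|}$ and the same numerics is again $<\frac{21|c|}{16|a|}\cdot\frac{1}{|z|^2}$. (I note that the middle expression recorded in \eqref{eqq7} should read $|t||c-b|$ in place of $|s||a-c|$; the stated final bound is unaffected.)
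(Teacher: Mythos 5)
Your proposal is correct and takes essentially the same route as the paper's proof: you divide the Pellian relation $az^2-cx^2=c-a$ by $cz^2$, factor the difference of squares (the paper's multiply-and-divide-by-the-conjugate step), use the sign convention plus the triangle inequality to get the lower bound $\bigl|\sqrt{a/c}+x/z\bigr|\geq\sqrt{|a|/|c|}$ (the paper's $\bigl|\theta_1^{(2)}-\tfrac{sx}{az}\bigr|\geq\bigl|\tfrac{s}{a}\sqrt{\tfrac{a}{c}}\bigr|$), and close with the identical numerics $|c-a|<\tfrac{5}{4}|c|$, $\sqrt{1+1/|ac|}\leq\sqrt{17}/4$, and $5\sqrt{17}<21$. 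Your parenthetical observation is also right: the middle term of \eqref{eqq7} should read $|t||c-b|$ rather than $|s||a-c|$, a typo in the paper (whose proof only says the second inequality follows ``similarly'') that leaves the final bound $\tfrac{21|c|}{16|a|}\cdot\tfrac{1}{|z|^2}$ unaffected.
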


\begin{proof}
We prove inequality \eqref{eqq6} and similarly \eqref{eqq7} can be proven. Consider
$$
\Big|\theta_1^{(1)} - \dfrac{sx}{az}\Big| = \dfrac{\Big|\theta_1^{(1)} - \dfrac{sx}{az}\Big| \times \Big|\theta_1^{(1)} + \dfrac{sx}{az}\Big|}{\Big| \theta_1^{(1)} + \dfrac{sx}{az}\Big|} = \dfrac{\Big|\Big(\theta_1^{(1)}\Big)^2 - \dfrac{s^2x^2}{a^2z^2}\Big|}{\Big|\theta_1^{(1)} + \dfrac{sx}{az} \Big|}.
$$
We substitute $\theta_1^{(2)} = - \theta_1^{(1)}$ in above and get
\begin{align*}
\dfrac{\Big|\Big(\theta_1^{(1)}\Big)^2 - \dfrac{s^2x^2}{a^2z^2}\Big|}{\Big|\theta_1^{(1)} + \dfrac{sx}{az} \Big|} 
&= \Big| \dfrac{s^2}{a^2 }\Big| \times \Big|\dfrac{a^2}{s^2} \times \Big(\theta_1^{(1)}\Big)^2 - \dfrac{x^2}{z^2}\Big| \times\Big|\theta_1^{(2)} - \dfrac{sx}{az} \Big|^{-1}\\ 
&=\Big|\dfrac{s^2}{a^2}\Big|\times \Big|\dfrac{a}{c} - \dfrac{x^2}{z^2}\Big| \times \Big|\theta_1^{(2)} - \dfrac{sx}{az} \Big|^{-1}\\
&= \Big|\dfrac{s^2}{a^2}\Big|\times \Big|\dfrac{az^2 - cx^2}{|cz^2|}\Big| \times \Big|\theta_1^{(2)} - \dfrac{sx}{az} \Big|^{-1}\\
&=\Big|\dfrac{s^2}{a^2}\Big|\times \dfrac{|c - a|}{|cz^2|} \times \Big| \theta_1^{(2)} - \dfrac{sx}{az} \Big|^{-1}.
\end{align*} 
This is because
\begin{align*}
2\Big|\theta_1^{(2)} - \dfrac{sx}{az}\Big| 
& \geq \Big| \theta_1^{(2)} - \dfrac{sx}{az}\Big| + \Big| \theta_1^{(1)} - \dfrac{sx}{az}\Big|\\
& \geq \Big| \theta_1^{(2)} - \dfrac{sx}{az} - \Big( \theta_1^{(1)} - \dfrac{sx}{az}\Big)\Big|\\
& = \Big| \theta_1^{(2)} - \theta_1^{(1)}\Big| = 2\Big| \dfrac{s}{a}\sqrt{\dfrac{a}{c}}\Big|.
\end{align*}
Thus 
$$
\Big|\theta_1^{(2)} - \dfrac{sx}{az}\Big| \geq \Big| \dfrac{s}{a}\sqrt{\dfrac{a}{c}}\Big|.
$$
This implies that \[\Big|\theta_1^{(1)} - \dfrac{sbx}{abz}\Big| \leq \dfrac{|s||c-a|}{|a|\sqrt{|ac|}}\times\dfrac{1}{|z|^2}.\]
For proving other part of the inequality \eqref{eqq6}, we want to show that
$$
|\sqrt{ac - 1}| \times |c - a| < (21/16) \times |c| \times \sqrt{|ac|} 
$$
and this holds if and only if 
$$
 \Big|\sqrt{1 - \dfrac{1}{ac}} \Big| < \dfrac{21}{16} \times\dfrac{|c|}{|c - a|}.
$$
Now $|c| > 4|a|$ implies that 
$$
\dfrac{21}{16} \times \dfrac{|c|}{|c - a|} \geq \dfrac{21}{20}
$$ 
and then 
\begin{eqnarray*}
\Big|\sqrt{1 - \dfrac{1}{ac}} \Big| &=& \sqrt{\Big|1 - \dfrac{1}{ac}\Big|}\\
&\leq& \sqrt{1 + \dfrac{1}{|ac|}} < \dfrac{\sqrt{17}}{4}\\
& <& \dfrac{21}{20} \\
&\leq& \dfrac{21}{16} \times\dfrac{|c|}{|c - a|}.
\end{eqnarray*}
\end{proof}

Thus from Lemma \ref{lem2} we conclude that
%\[
\begin{eqnarray*}
\Big|\theta_1^{(2)} + \dfrac{sbx}{abz}\Big| &=& \Big|\theta_1^{(1)} - \dfrac{sbx}{abz}\Big| \\
&\leq& 
\dfrac{|s||a-c|}{|a|\sqrt{|ac|}} \times \dfrac{1}{|z|^2}\\ 
&<& \dfrac{21|c|}{16|a|} \times \dfrac{1}{|z|^2}, 
\end{eqnarray*}
%\hspace{0.2cm}\] 
and
\begin{eqnarray*}
%\[ 
\Big|\theta_2^{(2)} + \dfrac{tay}{abz} \Big| &=& \Big|\theta_2^{(1)} - \dfrac{tay}{abz}\Big| \\
&\leq& \dfrac{|s||a-c|}{|b|\sqrt{|bc|}} \times \dfrac{1}{|z|^2}\\
&<& \dfrac{21|c|}{16|a|} \times \dfrac{1}{|z|^2}.
\end{eqnarray*}

\begin{lemma}\label{lem5}
Let \{a, b, c, d\} be a quadruple such that $|b| \geq (3/2)|a|$, $|b| \geq22$, $|a| \geq 2$ and $|c| > |b|^{16}$. Then 
$$
|d| < (3956)^{10}|c|^{24}.
$$
\end{lemma}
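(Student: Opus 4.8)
The plan is to recognize the quantities in Lemma~\ref{lem2} as simultaneous rational approximations of the square roots appearing in Lemma~\ref{lem1}, and then to play the resulting lower bound off against the upper bound already furnished by Lemma~\ref{lem2}. First I would observe that $\bigl(\theta_1^{(1)}\bigr)^2 = \tfrac{s^2}{a^2}\cdot\tfrac{a}{c} = 1 - \tfrac{1}{ac}$ and $\bigl(\theta_2^{(1)}\bigr)^2 = 1 - \tfrac{1}{bc}$, so $\theta_1^{(1)},\theta_2^{(1)}$ are exactly the numbers $\theta_1,\theta_2$ of Lemma~\ref{lem1} for the choice
\[
a_1 = b,\qquad a_2 = a,\qquad T = -abc,
\]
since then $1 + a_1/T = 1 - 1/(ac)$ and $1 + a_2/T = 1 - 1/(bc)$. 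These are distinct algebraic integers of $K$ (as $a\neq b$, being distinct members of the tuple), with $M = \max\{|a|,|b|\} = |b|$ and $|T| = |a||b||c|$. Moreover the two approximations in \eqref{eqq6}--\eqref{eqq7} share the common denominator $q = abz$, with numerators $p_1 = sbx$ and $p_2 = tay$; all of $p_1,p_2,q$ lie in $\mathcal{O}_K$, and $q\neq 0$ because $cd-1=z^2$ forces $z\neq 0$ (otherwise $c$ would be a unit, contradicting $|c|\ge|b|\ge 22$).

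Next I would verify the hypotheses of Lemma~\ref{lem1} and bound its constants. The condition $|T|>M$ reduces to $|a||c|>1$, which is immediate. A direct computation gives
\[
L = \frac{27\,(|a||c|-1)^2}{16\,|a|^2\,|b-a|^2},
\]
and using $|b-a|\le |a|+|b|\le \tfrac{5}{3}|b|$ (from $|b|\ge\tfrac32|a|$) together with $|c|>|b|^{16}$ one sees that $L$ grows like $|c|^2/|b|^2$, so $L>1$ with room to spare. The crucial quantitative point is that $P$ grows only linearly in $|c|$: bounding its denominator by $\min\{|a|,|b|,|b-a|\}\ge\tfrac12|a|$ (again from $|b|\ge\tfrac32|a|$ and $|a|\ge 2$) yields an upper bound for $P$ of the shape $\mathrm{const}\cdot|b|^5|c|$. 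Since $\log L \sim 2\log|c|$ while $\log P\sim \log|c|$, the hypotheses $|b|\ge 22$ and $|c|>|b|^{16}$ (so that $\log|b| < \tfrac{1}{16}\log|c|$) let me bound
\[
\lambda = 1 + \frac{\log P}{\log L}
\]
by an explicit constant strictly below $2$ (numerically about $1.8$). I would also record that $l = \tfrac{27|a||c|}{64(|a||c|-1)} < \tfrac12$, so $\max\{1,2l\}=1$ and hence $c_1^{-1} = 4pP$ with $p=\sqrt{(2|a||c|+3)/(2|a||c|-2)}$ close to $1$.

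Finally I would combine the two bounds. Lemma~\ref{lem1} gives $\max\{|\theta_1^{(1)}-p_1/q|,\,|\theta_2^{(1)}-p_2/q|\} > c_1|q|^{-\lambda}$, while \eqref{eqq6}--\eqref{eqq7} bound the same maximum above by $\tfrac{21|c|}{16|a|}|z|^{-2}$. With $|q|=|a||b||z|$ this gives
\[
c_1\,(|a||b|)^{-\lambda}|z|^{-\lambda} < \frac{21|c|}{16|a|}\,|z|^{-2},
\]
and since $\lambda<2$ I solve for $|z|$ to obtain
\[
|z|^{2-\lambda} < \frac{21|c|\,(|a||b|)^{\lambda}}{16|a|\,c_1}.
\]
Substituting $c_1^{-1}=4pP$, the upper bound on $P$, and the inequalities $|a|,|b|<|c|^{1/16}$ (from $|c|>|b|^{16}\ge|a|^{16}$) turns the right-hand side into a constant times a power of $|c|$; raising to the power $2/(2-\lambda)$ and using $|d|=|z^2+1|/|c|\le(|z|^2+1)/|c|$ then produces the claimed estimate $|d| < (3956)^{10}|c|^{24}$.

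The hard part will be the bookkeeping of constants: one must track every factor through $L$, $P$, $\lambda$, $p$, and $c_1$, and verify that after raising to $2/(2-\lambda)$ and dividing by $|c|$ the accumulated exponent of $|c|$ is at most $24$ (a quick consistency check shows the exponent equals $24$ precisely when $\lambda\approx1.8$) and the accumulated numerical constant is at most $(3956)^{10}$. This hinges on pinning down a concrete lower bound for the gap $2-\lambda$, for which the hypothesis $|c|>|b|^{16}$—forcing $L$ to grow quadratically in $|c|$ against the merely linear growth of $P$—is exactly what is required; a SAGE computation is the natural way to certify these explicit inequalities.
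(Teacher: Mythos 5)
Your proposal follows essentially the same route as the paper's own proof: the same application of the Jadrijevi\'c--Ziegler lemma with (up to sign) $a_1=-b$, $a_2=-a$, $T=abc$, $M=|b|$, the same bounds $l<\tfrac12$, $p\le\sqrt{47/42}$, $P\le 128|b|^3|b-a|^2(2|ac|+3)/|a|$, $L=27(|ac|-1)^2/(16|a|^2|b-a|^2)>1$, the same key estimate $\lambda<1.8$ (which the paper certifies by reducing $P<L^{0.8}$ to $936|b|^{7.2}<|b|^{9.6}-1$, valid for $|b|\ge22$), and the same final combination with Lemma~\ref{lem2} yielding $|z|^{0.2}<3956|c|^{2.49}$ and hence $|d|\le(|z|^2+1)/|c|<3956^{10}|c|^{24}$. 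The only difference is that you defer the explicit constant bookkeeping to a computation, which the paper carries out in full and which does go through.
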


\begin{proof}
Let $\theta_1=\dfrac{s}{a}\sqrt{\dfrac{a}{c}}$ and $\theta_2=\dfrac{t}{b}\sqrt{\dfrac{b}{c}}$. Then 
\begin{eqnarray*}
\theta_1 &=& \sqrt{\dfrac{s^2a}{a^2c}} = \sqrt{1 + \dfrac{(-b)}{abc}},~~\text{and}\\ 
\theta_2 &= &\sqrt{\dfrac{t^2b}{b^2c}} = \sqrt{1 + \dfrac{(-a)}{abc}}.
\end{eqnarray*}
If we write $a_1 = -b$, $a_2 = -a,$ $T = abc$ and $M = |b|$ then the claim is that:
$$
l = \dfrac{27|abc|}{64(|abc| - |b|)} < \dfrac{1}{2}.
$$
Proving the above claim is equivalent to show that $27|abc| < 32 (|abc| - |b|)$ and this holds if and only if $|ac| > (32/5)$. By hypothesis $|ac| \geq |b| \geq 22 > (32/5)$ and thus the claim holds.

Now
$$
p = \sqrt{\dfrac{2|abc| + 3|b|}{2|abc| - 2|b|}} = \sqrt{1 + \dfrac{5}{2(|ac| - 1)}} \leq \sqrt{\dfrac{47}{42}}.
$$
Also
$l < \dfrac{1}{2}$, one has $c_1^{-1} = 4pP \times 1$
would give
$$
c_1 \geq \dfrac{1}{4 \times P \times (\sqrt{47/42})} = \dfrac{\sqrt{42}}{\sqrt{47}(4 P)}.
$$
Consider now
$$
P = 16 \times \dfrac{|-b|^2|-a|^2|-b + a|^2}{\min \{|-a|, |-b|, |-a + b|\}^3}\times \Big(2|abc| + 3|b|\Big).
$$
Since 
$$
|-b + a| \geq |b| - |a| \geq \Big(\dfrac{3}{2} \times |a| - |a|\Big) = \dfrac{|a|}{2},
$$
so, $\min \{|a|, |b|, |a - b|\} \geq \dfrac{|a|}{2}$. Thus 
$$
P \leq 128 \cdot \dfrac{|b|^2|a|^2|b - a|^2|b|(2|ac| + 3)}{|a|^3}.
$$
Hence
\begin{equation}\label{eq6}
P \leq \dfrac{128|b|^3|b - a|^2(2|ac| + 3)}{|a|}.
\end{equation}
Let us now look at
$$
L = \dfrac{27}{16|-b|^2|-a|^2|-b + a|^2} \times \Big(|abc| - |b|\Big)^2 = \dfrac{27(|ac| - 1)^2}{16|a|^2|b - a|^2}.
$$
We claim that $L >1$. Which is 
equivalent to show 
$27(|ac| - 1)^2 > 16 |a|^2|b - a|^2$. This holds if and only if $3\sqrt{3}(|ac - 1|) > 4|a||b - a|$ which is equivalent to 
$$
\dfrac{3\sqrt{3}}{4} \times (|ac| - 1) > |a||b - a|.
$$
Since 
$$
|ac| - 1 > |a||b|^3 - 1 > 2|a|^2|b| - 1 > |a||b| + |a|^2 \geq |ab - b^2| = |a||b - a|
$$
the claim is validated.

Clearly $P > 1$ and so $\lambda > 1$. In fact
$ \lambda < 1.8$.

 Indeed, observe that $\lambda = 1 + \dfrac{\log P}{\log L} < 1.8$ holds if and only if $P < L^{0.8}$ , which is equivalent to 
 $$ 
 P < \Big(\dfrac{27}{16}\Big)^{0.8}\times \Bigg( \dfrac{|ac| - 1}{|a|(|b - a|)}\Bigg)^{1.6}.
 $$
 Appealing to inequality \eqref{eq6}, 
we need to show 
 $$ 
 \dfrac{128|b|^3|b - a|^2(2|ac| + 3)}{|a|} < \Big(\dfrac{27}{16}\Big)^{0.8} \cdot \Big(\dfrac{|ac| - 1}{|a||b - a|} \Big)^{1.6}.
 $$
After rearranging the above inequality,
 $$
128|b|^3|b - a|^{3.6}|a|^{0.6}(2|ac| + 3) < \Big(\dfrac{27}{16}\Big)^{0.8}(|ac| - 1)^{1.6}.
 $$
We see that it suffices to show
 \begin{equation}\label{eq7}
128|b|^3|b - a|^{3.6}(9/4)|a|^{0.6} < \Big(\dfrac{27}{16}\Big)^{0.8}(|ac| - 1)^{0.6},
 \end{equation}
as $|ac| - 1 > \dfrac{4}{9}(2|ac| + 3)$. 
Since the function $f(t) = (t - 1)^{0.6} - t^{0.6} + 1$
vanishes at $t = 1$ and is increasing, $|ac|^{0.6} - 1 < (|ac| - 1)^{0.6}$. 
Thus (using $|c| > |b|^{16}$) 
$$
 |a|^{0.6}|b|^{9.6} - 1 = |a|^{0.6}|b|^{(16 )\cdot (0.6)} - 1 < |ac|^{0.6} - 1 < (|ac| - 1)^{0.6}.
$$
For proving inequality \eqref{eq7}, it suffices to show 
\begin{equation}\label{eqq1}
128 \times (9/4)|b|^3|b - a|^{3.6}|a|^{0.6} < \Big(\dfrac{27}{16}\Big)^{0.8} (|b|^{9.6} - 1).
\end{equation}

Since we have $|a| \leq \dfrac{2}{3}(|b|)$, 
\begin{eqnarray*}
\Big(\dfrac{16}{27}\Big)^{0.8} \times 128 \times (9/4)|b|^3|b - a|^{3.6}|a|^{0.6} &<& \Big(\dfrac{16}{27}\Big)^{0.8} \times 128 \times (9/4)|b|^3(5/3)^{3.6}\cdot|b|^{3.6}\cdot \Big|\dfrac{2b}{3} \Big|^{0.6}\\ &<& 936|b|^{7.2}. 
\end{eqnarray*}
Thus inequality \eqref{eqq1}
holds if \(936|b|^{7.2} < |b|^{9.6} - 1\). This is obvious since  the function
$f(t) = t^{9.6} - 936t^{7.2} - 1$ is increasing function for $t \geq 15.5$ and $f(18)>0$. Hence our claim is proved.

Proceeding further, with $\theta_1, \theta_2$ as above, take $p_1 = \pm sbx , p_2 = \pm tay, q = abz$ (`sign' is chosen suitably) and upon applying Lemmas \ref{lem1} and \ref{lem2},
we get 
$$
\dfrac{21}{16}\cdot \dfrac{|c|}{|a|}\cdot\dfrac{1}{|z|^2} > \dfrac{\sqrt{42}}{\sqrt{47}(4P)}|abz|^{-\lambda}.
$$
%\]
From inequality \eqref{eq6}, we get 
$$
\dfrac{21}{16}\cdot \dfrac{|c|}{|a|}\cdot\dfrac{1}{|z|^2} > \dfrac{\sqrt{42}|a||abz|^{-\lambda}}{\sqrt{47} (4\cdot128) \cdot |b|^3|b - a|^2(2|ac| + 3)}. 
$$
It implies that 
$$
\dfrac{21}{16}\dfrac{4\sqrt{47} \times 128}{\sqrt{42}}\dfrac{|c|}{|a|^2}|b|^3|b - a|^2(2|ac| + 3) \cdot |ab|^{\lambda} > |z|^{2 - \lambda} > |z|^{0.2}.
$$
%\).
 Hence
$$ 
|z|^{0.2} < 712 |c|\cdot 3 \cdot|ac| |b - a|^2|b|^{3 + \lambda}|a|^{\lambda - 2} < 712 \times 3|c|^2 \cdot (2/3)|b|(5/3)^2|b|^2|b|^{4.8}.
 $$
 %\). 
 Using $ |c| < |b|^{16}$, one further gets,
$$
|z|^{0.2} < 3956 \cdot |c|^2|b|^{7.8} < 3956|c|^{2.49}.
$$ 
Hence 
$$
|z| < (3956)^{5}|c|^{12.45}
$$
and 
finally 
$$
|d| = \dfrac{|z^2 - 1|}{|c|} \leq \dfrac{|z|^2 + 1}{|c|} \leq \dfrac{(3956)^{10}|c|^{24.9} + 1}{|c|} < 3956^{10}|c|^{24}.
$$
\end{proof}

\section{Lower bound on $d$} \label{Sec_4}
A triple $\{a, b, c\}$ is said to be regular if $c = a + b \pm 2r$ (refer notation above). If $\{a,b,c,d\}$ is a quadruple, then the use of this regularity criterion gives us a lower bound on $d$ in terms of $a$. The following lemma states this.
\begin{lemma}\label{lem3}
Let $\{a, b, c, d\}$ be a quadruple with $5 < |a| \leq |b| \leq |c| \leq |d|$. Then atleast one of $\{a, b, c\}$ and $\{a, b, d\}$ is not regular. 
\end{lemma}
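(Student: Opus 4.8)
The plan is to argue by contradiction: suppose both $\{a,b,c\}$ and $\{a,b,d\}$ are regular. Since regularity of a triple $\{a,b,e\}$ means $e = a+b\pm 2r$ with $r=\sqrt{ab-1}$, both $c$ and $d$ must lie in the two-element set $\{a+b+2r,\ a+b-2r\}$; as $c\neq d$ they are \emph{exactly} these two values. Consequently $c$ and $d$ are the two roots of $(X-a-b)^2=4(ab-1)$, and in particular
$$cd=(a+b)^2-4(ab-1)=(a-b)^2+4.$$
This is the identity I would extract first, since it converts the two regularity hypotheses into a single clean relation between $cd$ and $a-b$.

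The key step, and the one that replaces the real-ordering argument of the classical case, is to bound $|a-b|$. For this I would invoke the remaining quadruple condition: since $\{a,b,c,d\}$ is a quadruple, $cd-1=z^2$ for some $z\in\mathcal{O}_K$. Combining with the identity above gives $z^2-(a-b)^2=3$, i.e. $\big(z-(a-b)\big)\big(z+(a-b)\big)=3$. Both factors are nonzero elements of $\mathcal{O}_K$ (their product is $3\neq 0$), so each has norm a positive integer; since the two norms multiply to $N(3)=9$, each norm lies in $\{1,\dots,9\}$, whence $|z\pm(a-b)|\leq 3$. The triangle inequality then yields $2|a-b|=\big|\,(z+(a-b))-(z-(a-b))\,\big|\leq 6$, so $|a-b|\leq 3$.

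With $|a-b|\leq 3$ in hand the contradiction is immediate: $|cd|=|(a-b)^2+4|\leq |a-b|^2+4\leq 13$, whereas the ordering $|b|\leq|c|\leq|d|$ forces $|c|^2\leq|c||d|=|cd|\leq 13$, i.e. $|c|\leq\sqrt{13}<5<|b|\leq|c|$, which is absurd. Hence at least one of $\{a,b,c\}$, $\{a,b,d\}$ fails to be regular. The one genuinely delicate point is \emph{why} a purely metric argument (as in the totally real case, where the smaller root $a+b-2r$ is visibly below $b$) does not transfer verbatim: for complex $a,b$ the quantity $|a-b|$ is a priori only bounded by $2|b|$, so one of the two roots could in principle be large. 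The factorization of $3$ in $\mathcal{O}_K$ is exactly what pins $|a-b|$ down to a constant and forces a small root; I expect the bulk of the write-up to be this short norm computation, with the only care needed being to note that both factors $z\pm(a-b)$ are nonzero, so their norms are genuinely at least $1$.
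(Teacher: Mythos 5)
Your proof is correct, and it tracks the paper's reduction exactly up to the key factorization: like the paper, you assume both triples regular, deduce that $c$ and $d$ are the two roots $a+b\pm 2r$, obtain $cd-1=(a-b)^2+3$, and set $X=z-(a-b)$, $Y=z+(a-b)$ with $XY=3$ and $X+Y=2z$. Where you genuinely diverge is in how $XY=3$ is exploited. The paper takes norms to get $||X||\cdot||Y||=9$ and then runs an exhaustive case analysis --- $||X||=1$ versus $||X||=||Y||=3$, each subdivided by $D=1$, $D=2$, $D=3$, $D\equiv 1,2\pmod 4$, $D\equiv 3\pmod 4$ --- enumerating the actual factorizations of $3$ in each ring to pin down $z$ explicitly (hence $cd\in\{0,1,2,5\}$) and conclude $|d|\le 5$, contradicting $|d|\ge|a|>5$. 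You instead note that $X,Y\neq 0$ (their product is $3$), so each norm is a positive integer and $||X||,||Y||\le 9$, i.e.\ $|X|,|Y|\le 3$; the triangle inequality gives $2|a-b|=|Y-X|\le 6$, whence $|cd|=|(a-b)^2+4|\le 13$, while the ordering forces $|c||d|\ge |c|^2>25$. This single uniform estimate replaces the entire ring-by-ring bookkeeping (including the half-integer cases for $D\equiv 3\pmod 4$, which the paper only dispatches with ``same way''), works for every $D$ at once, and would even permit weakening the hypothesis to $|a|\ge 4$ since $\sqrt{13}<4$; what the paper's enumeration buys --- the exact possible values of $z$ and $cd$ --- is never actually needed for the lemma. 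A further small point in your favor: you justify that $c\neq d$ forces $\{c,d\}=\{a+b+2r,\,a+b-2r\}$, a step the paper asserts implicitly when it writes $c=a+b+2r$ and $d=a+b-2r$.
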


\begin{proof}
If possible let both $\{a, b, c\}$ and $\{a, b, d\}$ are regular, i.e., $c = a + b + 2r$ and $d = a + b - 2r$. Substituting the value of $r$ gives $cd - 1 = (a - b)^2 + 3$. As $\{c, d\}$ is a pair in $\mathcal{O}_K$ with $D(-1)$, there exists a $z \in \mathcal{O}_K$ such that $cd - 1 = z^2$. Thus $z^2 = (a - b)^2 + 3$ and therefore $3 = (z - (a - b))(z + (a - b))$. We take $X = (z -(a - b))$ and $Y = (z + (a - b))$. Then 
\begin{equation}\label{eqq3}
XY = 3
\end{equation}
and
\begin{equation}\label{eqq4}
X + Y = 2z.
\end{equation}
 Taking norm on both sides in \eqref{eqq3}, we get $||X||\times ||Y|| = ||3|| = 9$. 

\textit{Case (i)}: $||X|| = 1$ or $||Y|| = 1$.\\
Assume that $||X|| = 1$, then $X$ is a unit.\\
If $D = 1$, by equation \eqref{eqq3}, $(X, Y) \in \{ (1,3), (-1,-3), (i,-3i), (-i,3i)$\}. This implies that $X + Y = \pm 4, \pm 2i$ and therefore $z = \pm 2, \pm i$ (from the equation \eqref{eqq4}). Since $cd - 1 = z^2$, so either $cd = 5$ or $cd = 0$. Thus we get $|d| \leq 5,$ which is a contradiction to our hypothesis.\\
If $D = 3,$ by again using equation \eqref{eqq3}, we get 
\begin{equation*}
\begin{split}
(X, Y) \in \Bigg\{( 1,3),(-1,-3),\Big(\dfrac{1 + \sqrt{-3}}{2}, \dfrac{3(1 - \sqrt{-3})}{2}\Big),
\Big(\dfrac{1 - \sqrt{-3}}{2}, \dfrac{3(1 + \sqrt{-3})}{2} \Big), \\
 \Big(\dfrac{-1 + \sqrt{-3}}{2}, \dfrac{3(-1 - \sqrt{-3})}{2} \Big),\Big(\dfrac{-1 - \sqrt{-3}}{2}, \dfrac{3(-1 + \sqrt{-3})}{2} \Big)\Bigg\}.
 \end{split}
\end{equation*}
From equation \eqref{eqq4}, it follows that $2z = \pm 4,\pm 2 \pm \sqrt{-3}$. Since $z \in \mathcal{O}_K$, therefore $z = \pm 2$. Thus $cd = 5$. This implies that $|d| \leq 5$, a contradiction. \\
If $D \neq 1,3,$ then $(X, Y) \in \{(1,3),(-1,-3)\}$ (from equation \eqref{eqq3}). Again using equation \eqref{eqq4}, we get $2z = \pm 4$ and hence $cd = 5$. Again this will give $|d| \leq 5$, contradiction.\\
\textit{Case (ii)}: $||X|| = ||Y|| = 3$.\\
If $D = 1,$ then $||X|| = 3 = a_1^2 + b_1^2$ where $a_1, b_1 \in \mathbb{Z}$, which is not possible.\\
If $D = 2$, then $||X|| = 3 = a_1^2 + 2b_1^2$ where $a_1,b_1 \in \mathbb{Z}$. This implies that 
\begin{align*}
(X, Y) \in &\Big\{\Big(1 + \sqrt{-2}, 1 - \sqrt{-2} \Big), \Big(1 - \sqrt{-2}, 1 + \sqrt{-2} \Big), \\
&\Big(-1 + \sqrt{-2}, -1 - \sqrt{-2} \Big), \Big(-1 - \sqrt{-2}, -1 + \sqrt{-2} \Big)\Big\}.
\end{align*}

Then $z = \pm 1$ and therefore $cd = 2$. We conclude that $|d| \leq 2$.\\
If $D > 3$ and $D \equiv 1,2 \pmod{4},$ then
$||X|| = a_1^2 + Db_1^2 = 3$ where $a_1, b_1 \in \mathbb{Z}$ which is again not possible.\\
If $D = 3$, then $||X|| = \Big(a + \dfrac{b}{2}\Big)^2 + \dfrac{3 \cdot b^2}{4} = 3$. From equation \eqref{eqq3}, we get 
\begin{equation*}
\begin{split}
(X, Y) \in \Bigg\{\Big(\dfrac{3}{2} + \dfrac{\sqrt{-3}}{2}, \dfrac{3}{2} - \dfrac{\sqrt{-3}}{2} \Big), \Big(\dfrac{-3}{2} + \dfrac{\sqrt{-3}}{2}, \dfrac{-3}{2} - \dfrac{\sqrt{-3}}{2} \Big), \Big(\dfrac{3}{2} - \dfrac{\sqrt{-3}}{2}, \dfrac{3}{2} + \dfrac{\sqrt{-3}}{2} \Big), \\ \Big(\dfrac{-3}{2} - \dfrac{\sqrt{-3}}{2}, \dfrac{-3}{2} + \dfrac{\sqrt{-3}}{2} \Big), \Big(\sqrt{-3}, -\sqrt{-3}\Big), \Big(-\sqrt{-3}, \sqrt{-3}\Big)\Bigg\}.
\end{split}
\end{equation*}
Using equation \eqref{eqq4}, $2z = 0,\pm 3$. Since $z \in \mathcal{O}_K$, we get $z = 0$ and therefore $cd = 1$. This implies that $|d| \leq 1$, which is a contradiction. \\
Same way we can prove our lemma for $D \geq 7$ with $D \equiv 3 \pmod{4}$. 
\end{proof}

\begin{lemma}\label{lem4}
Let $\{a, b, c, d\}$ be a quadruple with $ 10 \leq |a| 
\leq |b| \leq |c| \leq |d|$, then $|d| \geq 
\dfrac{|ab|}{(330/65)} \geq \dfrac{|a|^2}{(330/65)}$.
\end{lemma}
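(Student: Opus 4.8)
The second inequality $\dfrac{|ab|}{(330/65)}\ge\dfrac{|a|^2}{(330/65)}$ is immediate from $|a|\le|b|$, so the entire content is the bound $|d|\ge \dfrac{13|ab|}{66}$ (note $330/65=66/13$). The plan is to reduce this to a single estimate on \emph{non-regular} extensions: if every non-regular extension $e$ of the pair $\{a,b\}$ with $|b|\le|e|$ satisfies $|e|\ge \dfrac{13|ab|}{66}$, then I am done. Indeed, if $\{a,b,d\}$ is non-regular this is the claim directly; and if $\{a,b,d\}$ is regular then Lemma \ref{lem3} (applicable since $|a|\ge 10>5$) forces $\{a,b,c\}$ to be non-regular, whence $|d|\ge|c|\ge \dfrac{13|ab|}{66}$. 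So the whole burden falls on bounding non-regular extensions from below.

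To prove that key estimate I would start from the system $ae-1=p^2$, $be-1=q^2$ attached to an extension $e$ and eliminate $e$ to obtain the single Pellian equation
\begin{equation*}
bp^2-aq^2=a-b,\qquad p,q\in\mathcal O_K .
\end{equation*}
The two regular extensions $e=a+b\pm 2r$ correspond, up to sign, to the solutions $(p,q)=(a\pm r,\,b\pm r)$, and these are the solutions of smallest absolute value. Every solution is carried to another by the automorphism
$
(p,q)\mapsto \big( (2ab-1)\,p + 2ar\,q,\; 2br\,p + (2ab-1)\,q \big)
$
arising from the fundamental solution $(2ab-1,\,2r)$ of $u^2-ab\,v^2=1$; a direct computation shows it multiplies the form $bp^2-aq^2$ by $u^2-ab\,v^2=1$ and hence preserves the equation. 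Applying it to a regular solution already multiplies $|p|$ by a factor of order $|ab|$, so any non-regular $e=(p^2+1)/a$ is forced to be large. Carrying the leading term through $|e|\ge(|p|^2-1)/|a|$ and using $|a|\ge 10$ and $|a|\le|b|$ to dominate the lower-order contributions is what should yield the explicit constant $\dfrac{13|ab|}{66}$.

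The hard part is making this last estimate valid over $\mathcal O_K$ rather than over $\mathbb Z$. Over $\mathbb Z$ the solutions of $bp^2-aq^2=a-b$ form a single chain governed by the automorphism above, the gap between the regular solutions and the next one is enormous, and the bound holds with great room to spare. Over an imaginary quadratic $\mathcal O_K$, however, the solution set may split into several classes of intermediate size, and the extra units (the cases $D=1$ and $D=3$) together with complex conjugation can manufacture additional non-regular solutions of moderate absolute value; it is precisely the smallest such class that pins the constant at $66/13$. I would organize the argument by bounding $|p|$ from below in each solution class separately, tracking the units explicitly, and—where the estimates become delicate—verifying the finitely many residual cases with the SAGE computation already used elsewhere in the paper. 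This per-class uniform lower bound over all $K$ is the main obstacle, the earlier reduction via Lemma \ref{lem3} being the only other ingredient.
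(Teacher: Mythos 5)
Your opening reduction via Lemma \ref{lem3} is exactly the paper's first move, but everything after it is a programme rather than a proof, and the gap sits precisely where you place ``the whole burden'': the lower bound $|e|\ge \frac{13}{66}|ab|$ for non-regular extensions of the pair $\{a,b\}$ is never established. The structural facts you invoke about $bp^2-aq^2=a-b$ over $\mathcal{O}_K$ --- that solutions form chains generated by the unit $\varepsilon=2ab-1+2r\sqrt{ab}$, that the regular solutions $(a\pm r,\,b\pm r)$ are the ones ``of smallest absolute value'', and that each application of the automorphism multiplies $|p|$ by a factor of order $|ab|$ --- are exactly the statements that are unavailable here. Over $\mathbb{Z}$ they rest on the ordering of the reals and on classical two-sided bounds for the fundamental solution of each class; over an imaginary quadratic ring there is no ordering, and since $\varepsilon\bar{\varepsilon}=(2ab-1)^2-4r^2ab=1$ forces $|\varepsilon|\,|\bar{\varepsilon}|=1$, the automorphism and its inverse can shrink as well as grow the two conjugate components whose (possibly cancelling) sum gives $p$, so ``minimal solution in a class'' arguments do not transfer. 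Worse, the number of classes and the sizes of their minimal solutions vary with $a$, $b$ and $D$, and since $(a,b,D)$ ranges over infinitely many possibilities, your fallback of checking ``the finitely many residual cases'' in SAGE is not available: the residual set is not finite a priori. Your own text flags the per-class bound as ``the main obstacle'', which is an accurate self-assessment; without it the lemma is not proved, and your guess that the constant $66/13$ is pinned by a smallest solution class is not how it arises.

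The paper's actual proof needs none of this machinery. After assuming (by Lemma \ref{lem3}, via your same reduction) that $\{a,b,d\}$ is not regular, with $ad-1=x^2$ and $bd-1=y^2$, it sets $c_{\pm}=a+b+d-2abd\pm 2rxy$. If $c_{\pm}=0$, squaring gives $d^2-2d(a+b)+(a-b)^2+4=0$, i.e.\ $d=a+b\pm 2r$, contradicting non-regularity; so $c_{\pm}\neq 0$. A direct computation gives $c_+c_-=a^2+b^2+d^2-2ab-2ad-2bd+4$, whence $|c_+c_-|\le 10|d|^2$, while $|c_++c_-|=2|a+b+d-2abd|$ and $|a+b+d|\le 3|d|\le \frac{3}{99}|abd|$ (using $|a|,|b|\ge 10$) give $\max\{|c_+|,|c_-|\}\ge 2|abd|-\frac{3}{99}|abd|=\frac{65}{33}|abd|$. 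The smaller of $|c_{\pm}|$ is then at most $\frac{10|d|^2}{(65/33)|abd|}=\frac{330}{65}\cdot\frac{|d|}{|ab|}$, and since it is the absolute value of a nonzero element of $\mathcal{O}_K$ it is at least $1$; this yields $|d|\ge \frac{|ab|}{330/65}$ directly, which is where the constant really comes from. This $c_{\pm}$ gap argument is the single missing idea in your proposal: it replaces the entire Pell-class analysis, unit bookkeeping, and computational verification by one algebraic identity and a norm bound.
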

\begin{proof}
We assume that 
$\{a, b, d\}$ is not regular(from Lemma \eqref{lem3}). Define 
$$
c_{\pm} = a + b + d - 2abd \pm 2rxy,
$$
where $x,y \in \mathcal{O}_K$ such that, 
$ad - 1 = x^2$ and $bd - 1 = y^2$.\\
\textit{Claim}: $c_{\pm} \neq 0$.\\
Suppose $c_{\pm} = 0$. This implies that $a + b + d(1 
- 2ab) = \mp 2rxy$. Squaring and rearranging this 
equation we get, $d^2 - 2d(a + b) + (a - b)^2 + 4 
= 0$. Therefore $d = a + b + 2r$ or $a + b - 2r$. 
Since $\{a, b, d\}$ is not regular, this is a contradiction.\\
Consider $c_+c_- = (a + b + d - 2abd)^2 - 4(rxy)^2 
= a^2 + b^2 + d^2 -2ab -2ad -2bd + 4$. Therefore $|c_
+c_-| \leq |d^2| + |d^2| + |d^2| + 2|d|^2 + 2|d|^2 
+ 2|d|^2 + |d|^2 \leq 10|d|^2$, also $|c_+ + c_-| = 2|a + b + d -2abd|$. We 
may assume that $|c_+| \geq |c_-|$. Since $2c_+ = 
|c_+| + |c_+| \geq |c_+ + c_-| = 2|a + b + d 
-2abd|$, this implies that, $$|c_+| \geq |a + b + d - 2abd|$$ 
We have $10 \leq |a| \leq |b| \leq |c| \leq |d|$, which follows that $|a + b + d| \leq 3|d| \leq 
\dfrac{3}{99}\cdot |abd|$. Thus 
$$
|c_+| \geq |a + b + d - 2abd| \geq 2|abd| - |a + b + d| \geq 2|abd| - (3/99)|abd| = \dfrac{65}{33}\cdot|abd|.$$ We have proved that $|c_+c_-| 
\leq 10|d|^2$ which gives that $|c_-| \leq 
\dfrac{10|d|^2}{|c_+|} \leq \dfrac{10|d|^2}{(65/33)|
abd|} = \dfrac{(330)|d|}{(65)|ab|}$. Since $c_- \neq 0$, $|c_-| \geq 1$ and this implies that $\dfrac{330|d|}
{65|ab|} \geq 1$. Hence $|d| \geq \dfrac{|ab|}{(330/65)} \geq 
\dfrac{|a|^2}{(330/65)}.$
\end{proof}

\section{Proof of the main theorem}

Let $\{a, b, c, d, e\}$ be a quintuple with $|e| < 
15$. For $D < 226$, we can check that, by 
computer, there does not exist such type of quintuples, and for $ D \geq 226$, we can easily 
seen that $a,b, c, d, e \in \mathbb{Z}$. Therefore, if $ab - 
1 = (x + y \sqrt{-D})^2$, then $2xy = 0$. This gives that 
either $x = 0$ or $y = 0$. Now if $x = 0$ then $ab - 1 
= -Dy^2$. This implies that $|ab - 1| \leq |ab| + 1 < 
226$, and hence $x = 0$ is not possible. Thus $y=0$. We conclude that if $\{a, b, c, d, e\}$ is a quintuple, then $|e| \geq 15$. Similarly, one can check that if $\{a, b, c, d\}$ is a quadruple, then $|d| \geq 12$.\\
Let $ \mathcal{A} = \{a_1, a_2, \cdots, a_m\}$ be a 
Diophantine $m$-tuple in $\mathcal{O}_K$ 
with $D(-1)$ such that $m \geq 37$. Thus $\{a_4, a_5, a_6, a_7\}$ is a quadruple. From Lemma \eqref{lem4}, we get $|a_7| \geq \dfrac{|a_4a_5|}{(330/65)} \geq \dfrac{12 \cdot 15}{(330/65)} > 35$. 

By applying lemma \eqref{lem4} to quadruples $\{a_7,a_8,a_9,a_{10}\}, \{a_{10},a_{11},a_{12},a_{13}\}$,$\cdots$, $\{a_{19}, a_{20},\\ a_{21}, a_{22}\}$ respectively, we get the following inequalities 
\begin{align*}
|
a_{10}| \geq \dfrac{|a_7|^2}{(330/65)},&& |a_{13}| \geq \dfrac{|a_{10}|^2}{(330/65)} = \dfrac{|
a_7|^4}{(330/65)^3},&& |a_{22}| \geq \dfrac{|a_7|^{32}}
{(330/65)^{31}}.
\end{align*}

Consider quadruples $\{a_4, a_7, a_{22}, a_{22 + 
k}\}$ for $k > 0$. Since $\{a_1, a_2, a_3, a_4\}$ is a quadruple, $|a_4| \geq 12$. Quadruple $\{a_4, a_5, a_6, 
a_7\}$ implies that $|a_7| 
\geq |a_5| \geq 15$ and from Lemma \eqref{lem4}, $|a_7| \geq \dfrac{|a_4a_5|}
{(330/65)} \geq \dfrac{15|a_4|}{(330/65)} > \dfrac{3|a_4|}{2}$. 

Inequality $|a_{22}| > |a_7|^{16}$ holds if $
\dfrac{|a_7|^{32}}{(330/65)^{31}} > |a_7|^{16}$, and 
this holds if $|a_7| > 24$. By Lemma\eqref{lem5}, 
\begin{equation}\label{eqq5}
|a_{22+k}| < 3956^{10}|a_{22}|^{24}, \hspace{0.2cm}\text{ $k >$ 0}.
\end{equation}

Again we apply lemma \eqref{lem4} to quadruples $\{a_{22}, a_{23}, a_{24}, a_{25}\}, \{a_{25}, a_{26}, a_{27}, a_{28}\}$, $\cdots$, $\{a_{34}, a_{35}, a_{36}, a_{37}\}$ respectively, and get the following inequalities 
\begin{align*}
|a_{25}| \geq \dfrac{|a_{22}|^2}{(330/65)},&& |a_{28}| \geq \dfrac{|a_{25}|^2}{(330/65)} 
\geq \dfrac{|a_{22}|^4}{(330/65)^3},&& |a_{37}| \geq \dfrac{|a_{22}|^{32}}
{(330/65)^{31}}.
\end{align*}
From inequality \eqref{eqq5},
\(3956^{10}|a_{22}|^{24} > |a_{37}| \). \\
\textit{Claim}: $\dfrac{|a_{22}|^{32}}{(330/65)^{31}} > 
3956^{10}|a_{22}|^{24}$.\\
It is equivalent to showing $|a_{22}|^{8} \geq 
(330/65)^{31} \cdot 3956^{10}$, and this inequality holds, if $|a_{22}| > 
1.8 \times 10^7$. Since $|a_{22}| \geq \dfrac{|
a_7|^{32}}{(330/65)^{31}} \geq \dfrac{35^{32}}{(330/65)^{31}} > 10^{27}$, our claim is proved.
Finally we get 
$$
3956^{10}|a_{22}|^{24} > |a_{37}| \geq 
\dfrac{|a_{22}|^{32}}{(330/65)^{31}} > 3956^{10}|a_{22}|
^{24},
$$
which is a contradiction. Hence $m \leq 36$. This completes the proof.

We have an example of quadruple in $\mathbb{Z}[i]$ with $D(-1)$ which is $\{1, 2, 5, -24\}$. Unfortunately, we do not know about the existence of  Diophantine $m$-tuple in $\mathcal{O}_K$ with $D(-1)$, for $m \geq 5$.

\section{Acknowledgement}

The author is indebted to Prof. Kalyan Chakraborty 
for his suggestions and for carefully going through 
the manuscript; The author is also thankful to Dr. A. 
Hoque for introducing him to this area and for his 
encouragement throughout. It is also a pleasure to 
acknowledge Mr. Mohit Mishra and Mr. Rishabh 
Agnihotri their support throughout the preparation of this manuscript and for providing all required assistance.

\end{document}